\newtheorem{Th}{Theorem}[section]
\newtheorem{Prop}[Th]{Proposition}
\newtheorem{Lem}[Th]{Lemma}
\newtheorem{Rem}[Th]{Remark}
\newcommand{\C}{\mathbb{C}}
\newcommand{\R}{\mathbb{R}}
\newcommand{\N}{\mathbb{N}}
\newcommand{\vertiii}[1]{{\vert\kern-0.25ex\vert\kern-0.25ex\vert #1 
    \vert\kern-0.25ex\vert\kern-0.25ex\vert}}
\title[The DNLS in weighted Sobolev spaces]
{Persistence of solutions to the Derivative Nonlinear Schr\"odinger equation in weighted Sobolev spaces}
\author[A. J. Castro]{A. J. Castro}
\author[K. Jabbarkhanov]{K. Jabbarkhanov}
\author[A. Kassimbekov]{A. Kassimbekov}
\address{\newline
       Alejandro J. Castro, Khumoyun Jabbarkhanov,
       Azamat Kassimbekov \newline
       Department of  Mathematics, Nazarbayev University \newline
		010000 Astana, Kazakhstan}
\email{\{alejandro.castilla,khumoyun.jabbarkhanov,azamat.kassimbekov\}@nu.edu.kz}
 \thanks{
This paper was supported by the Ministry of Education and Science of the Republic of Kazakhstan (Grant No AP19676408)}
 \keywords{Derivative nonlinear Schr\"odinger equation, persistence property, weighted Sobolev space}
 \subjclass[2020]{35Q55, 35A02, 35B65}
\begin{document}

\maketitle

\begin{abstract}
In this paper we show the persistence property for solutions of the derivative nonlinear Schr\"odinger equation with initial data in  weighted Sobolev spaces  $H^{2}(\R)\cap L^2(|x|^{2r}dx)$, $r\in (0,1]$. 
\end{abstract}


\section{Introduction}

In this paper we consider the initial value problem for the derivative nonlinear Schr\"odinger (DNLS) equation
\begin{equation}\label{DNLS}
\left\{
\begin{array}{l}
i \partial_t u 
+ \partial_x^2 u 
=i\lambda \partial_x(|u|^2u), \\
u(x,0)=u_{0}(x),
\end{array}
\right.
\end{equation}
which was introduced in connection to the compressible magneto-hydrodynamic equation in the
presence of the Hall effect and the propagation of circular polarized Alfv\'en
waves in magnetized plasma (\cite{MOMT,Mjo}).\\

Local and global well-posedness results for the Cauchy problem 
\eqref{DNLS} have been extensively analysed in Sobolev spaces
(see for instance
\cite{
BaPe,
CKST2001,
CKST2002, 
Hayashi1993TheIV,
Hayashi1992OnTD, HO1994,
Hayashi1994ModifiedWO,
Kaup1978AnES,
Ozawa1996,
Tak1999}),
where the optimal results in $H^{1/2}(\R)$ were deduced by 
H. Takaoka \cite{Tak1999}; 
H. Bahouri and G. Perelman \cite{BaPe}, respectively.\\

Motivated by the seminal work of T. Kato \cite{Kato1983}, the solvability of many dispersive nonlinear equations
 has also been recently studied in the weighted Sobolev spaces $H^{s}(\R)\cap L^2(|x|^{2r}dx)$, aiming to 
control better the decay at infinity of the solutions (see for example 
\cite{
BusJi2018,
FLP2015,
Jim2013,
JenLiuPerSul2018,
MuPa2022,
NP2009}
and the references therein).
In particular in \cite{CN2010, CJZ2022} the authors 
analyzed the behaviour of solutions to the nonlinear Schr\"odinger--Airy equation
\begin{equation*}
\partial_t u 
+ i \, a \, \partial_x^2 u  
+ b \, \partial_x^3 u
+ i \, c \, |u|^2 u 
+ d \, |u|^2 \partial_x u
+ e \, u^2 \partial_x \overline{u} =0
\end{equation*}
in weighted Sobolev spaces, assuming $b \neq 0$ and hence not covering the case of the DNLS. Inspired by the work of T. Ozawa \cite{Ozawa1996}, our main goal is the following persistency property.

\begin{Th}\label{Th:main} 
Let $\lambda \in \R$, $r\in(0,1]$ and 
$u_{0}\in H^{2}(\R)\cap L^2(|x|^{2r}dx)$.
Then, there exist $T>0$ and a unique
solution $u$ of the initial value problem \eqref{DNLS} verifying that
\begin{equation}\label{eq:persistenciofu}
u(\cdot,t)\in H^{2}(\R)\cap L^2(|x|^{2r}dx),
\quad t \in (0,T].    
\end{equation}
\end{Th}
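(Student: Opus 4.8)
The plan is to combine the local well-posedness theory in $H^{2}(\R)$ with a weighted energy estimate that exploits the divergence structure of the nonlinearity. Since $2>1/2$, the well-posedness results quoted above (e.g. \cite{Ozawa1996,Tak1999}) already furnish $T>0$ and a \emph{unique} solution $u\in C([0,T];H^{2}(\R))$ of \eqref{DNLS}, so that $M:=\sup_{t\in[0,T]}\|u(t)\|_{H^{2}}<\infty$ and, by Sobolev embedding, $\sup_{t\in[0,T]}\|u(t)\|_{L^{\infty}}<\infty$. This settles the $H^{2}$ part of \eqref{eq:persistenciofu} and the uniqueness statement, because any solution in the weighted class is in particular an $H^{2}$-solution. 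It remains to show the decay bound $\||x|^{r}u(t)\|_{L^{2}}<\infty$ on $(0,T]$. As $\langle x\rangle^{2r}\simeq 1+|x|^{2r}$, this is equivalent to $\langle x\rangle^{r}u(t)\in L^{2}(\R)$, and I would work throughout with the smooth weight $\langle x\rangle^{r}=(1+x^{2})^{r/2}$.

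A direct energy computation with $\langle x\rangle^{2r}$ would be circular, since it presupposes $\langle x\rangle^{r}u\in L^{2}$. To avoid this I would introduce bounded truncations $w_{N}$ with $w_{N}\nearrow\langle x\rangle^{r}$ pointwise, $0\le w_{N}\le\langle x\rangle^{r}$, and, crucially, $\sup_{N}\|w_{N}'\|_{L^{\infty}}\le C$. The uniform bound on $w_{N}'$ is precisely where the restriction $r\le 1$ enters, because $|\partial_{x}\langle x\rangle^{r}|=r|x|\langle x\rangle^{r-2}\le r\langle x\rangle^{r-1}\le r$. Since each $w_{N}$ is bounded, $w_{N}u\in C([0,T];L^{2})$ automatically and every integration by parts below is legitimate for the $H^{2}$-solution.

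Writing the equation as $\partial_{t}u=i\partial_{x}^{2}u+\lambda\partial_{x}(|u|^{2}u)$, the core step is the identity
\[
\frac{d}{dt}\|w_{N}u\|_{L^{2}}^{2}=2\,\mathrm{Re}\int_{\R}w_{N}^{2}\,\overline{u}\,\bigl(i\partial_{x}^{2}u+\lambda\partial_{x}(|u|^{2}u)\bigr)\,dx .
\]
For the linear part I integrate by parts once; the term $\int w_{N}^{2}|\partial_{x}u|^{2}$ is real and dies under $\mathrm{Re}$, leaving $4\,\mathrm{Im}\int w_{N}w_{N}'\,\overline{u}\,\partial_{x}u$, which is $\lesssim\|w_{N}'\|_{L^{\infty}}\|w_{N}u\|_{L^{2}}\|\partial_{x}u\|_{L^{2}}\lesssim M\,\|w_{N}u\|_{L^{2}}$. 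In the nonlinear part, integration by parts moves $\partial_{x}$ onto $\partial_{x}(w_{N}^{2}\overline{u})=2w_{N}w_{N}'\overline{u}+w_{N}^{2}\overline{u}_{x}$; the first summand contributes $\lesssim\|w_{N}'\|_{L^{\infty}}\|u\|_{L^{\infty}}^{2}\|w_{N}u\|_{L^{2}}\|u\|_{L^{2}}$. The genuinely dangerous piece is $\int w_{N}^{2}\,\overline{u}_{x}\,|u|^{2}u$, which at first sight loses a derivative; I expect this to be the main obstacle. It is resolved by the divergence (conservation-law) structure of the DNLS nonlinearity: only the real part survives, and $\mathrm{Re}(u\,\overline{u}_{x})=\tfrac12\partial_{x}|u|^{2}$, so
\[
\mathrm{Re}\int_{\R}w_{N}^{2}\,|u|^{2}\,(u\,\overline{u}_{x})\,dx=\tfrac14\int_{\R}w_{N}^{2}\,\partial_{x}(|u|^{4})\,dx=-\tfrac12\int_{\R}w_{N}w_{N}'\,|u|^{4}\,dx ,
\]
which contains no derivative of $u$ and is again $\lesssim\|w_{N}'\|_{L^{\infty}}\|u\|_{L^{\infty}}^{2}\|w_{N}u\|_{L^{2}}\|u\|_{L^{2}}$. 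Thus the top-order contribution becomes a perfect $x$-derivative after taking real parts, and this same mechanism, through the bound on $w_{N}'$, is what confines the method to $r\le 1$.

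Collecting the estimates yields $\frac{d}{dt}\|w_{N}u\|_{L^{2}}^{2}\le C(M)\,\|w_{N}u\|_{L^{2}}(1+\|w_{N}u\|_{L^{2}})$, \emph{uniformly in} $N$. Gronwall's inequality then gives $\sup_{t\in[0,T]}\|w_{N}u(t)\|_{L^{2}}\le C(M,T)$, a bound independent of $N$ because $\|w_{N}u_{0}\|_{L^{2}}\le\|\langle x\rangle^{r}u_{0}\|_{L^{2}}<\infty$ by hypothesis. Letting $N\to\infty$ and invoking monotone convergence produces $\langle x\rangle^{r}u(t)\in L^{2}(\R)$ with the same bound, hence $u(t)\in L^{2}(|x|^{2r}dx)$ for every $t\in(0,T]$, which completes \eqref{eq:persistenciofu}.
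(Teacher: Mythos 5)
Your argument is correct, but it takes a genuinely different route from the paper. The paper never performs a weighted energy estimate on the DNLS itself: following Ozawa \cite{Ozawa1996}, it gauge-transforms \eqref{DNLS} into the constrained NLS system \eqref{trNSE1}, runs a Banach fixed-point argument in a weighted Strichartz-type space $X_T$ (using the weighted semigroup estimate \eqref{semigroup_lemma} of Nahas--Ponce to propagate $\||x|^r\cdot\|_{L^2}$ through Duhamel's formula), establishes Lipschitz dependence on the data in the weighted norm, and finally recovers $u$ as the limit of solutions $u^{(n)}$ built from Schwartz approximations of the gauged initial datum. Your approach instead takes the $H^2$ local theory as a black box and propagates the weight by a direct commutator/energy computation with truncated weights $w_N$; its key point --- that the derivative-loss term $\mathrm{Re}\int w_N^2\,|u|^2 u\,\overline{u}_x\,dx$ collapses to $\tfrac14\int w_N^2\,\partial_x(|u|^4)\,dx$ and hence to a harmless boundary-type term after one integration by parts --- is exactly the structural fact that makes the untransformed equation tractable, and the computation checks out (the linear part likewise leaves only $4\,\mathrm{Im}\int w_Nw_N'\,\overline{u}\,u_x$, controlled by $\sup_N\|w_N'\|_{L^\infty}<\infty$, which is where $r\le 1$ enters in both arguments). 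Your method is shorter and more self-contained modulo the quoted $H^2$ well-posedness, and it yields the weighted bound on the full $H^2$ lifespan; what the paper's construction buys in exchange is quantitative continuous dependence of the solution on the data in the weighted topology (the Lipschitz estimate \eqref{um-un}), which a Gronwall a priori bound does not directly give, though the theorem as stated does not require it. Two small points you should still write out: an explicit construction of the truncations $w_N\nearrow\langle x\rangle^r$ with $\sup_N\|w_N'\|_{L^\infty}\lesssim r$, and the observation that $u\in C([0,T];H^2)$ together with the equation gives $u\in C^1([0,T];L^2)$, which is what legitimizes differentiating $\|w_Nu\|_{L^2}^2$ in time.
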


\quad 
\section{Preliminaries}
In this section we introduce the notation and recall some properties that will be often used in this work.\\

We write $A \lesssim B$ when there exists a constant $C>0$ such that $A \leq C B$, and $A \sim B$ if simultaneously $A \lesssim B$ and $B \lesssim A$.
\vspace{1em}

$\mathcal{S}(\R)$ refers to the Schwartz space consisting on smooth functions whose derivatives are rapidly decreasing.
\vspace{1em}

We denote the Fourier transform of a square-integrable function $f$ by
\begin{equation*}
\widehat{f}(\xi)
:= \int_{\R} e^{-ix\xi} f(x) \, dx,
\end{equation*}
which allows to define the Sobolev space $H^s(\mathbb{R})$, of regularity $s \in \mathbb{R}$, via the norm
$$
\|f\|_{H^s}
:=
\Big(  \int_{\R}
(1+|\xi|^2)^{s}
|\widehat{f}(\xi)|^2 \, d\xi\Big)^{1/2}.$$

The inclusion 
\begin{equation}\label{eq:Sobolevinclusion}
\|f\|_{H^s}
\lesssim
\|f\|_{H^{s'}}, \quad s \leq s',
\end{equation}
the product estimate
\begin{equation}\label{eq:Sobolevproduct}
\|fg\|_{H^s}
\lesssim
\|f\|_{H^{s}} \,
\|g\|_{H^{s}}, \quad s > 1/2, 
\end{equation}
and the embedding
\begin{equation}\label{eq:Linfinityembeding}
\|f\|_{L^\infty}
\lesssim 
\|f\|_{H^s}, \quad s > 1/2,
\end{equation}
are standard properties that will be very helpful.
\vspace{1em}

For $\alpha\in\R$, we define the fractional derivative $D_x^{\alpha}$ as the Fourier multiplier given by
$$(D_x^{\alpha}f)^{\wedge}(\xi)
:=
|\xi|^{\alpha}\widehat{f}(\xi).
$$
Analogously, consider
$$
\big((1+D_x^2)^{\alpha}f\big)^{\wedge}(\xi)
:=
(1+|\xi|^2)^{\alpha}\widehat{f}(\xi).
$$
Hence, Plancherel's identity yields to
\begin{equation*}
\|f\|_{H^s}
\sim 
\|(1+D_x^2)^{s/2}f\|_{L^2}
\lesssim 
\|f\|_{L^2}
+
\|D_x^s f\|_{L^2}.
\end{equation*}

On the other hand, the $L^2$-weighted space $L^2(|x|^{2r}dx)$ is 
given by
\begin{equation*}
\|f\|_{L^2(|x|^{2r}dx)}
= 
\||x|^r f\|_{L^2}
=
\Big(  
\int_{\R}
|f(x)|^2 |x|^{2r} \, dx
\Big)^{1/2}.
\end{equation*}
Notice the connection
\begin{equation*}
\|f\|_{L^2(|x|^{2r}dx)}
\sim
\|D^r_{x} \widehat{f}\|_{L^2}.
\end{equation*}

\quad \\
Now, for a function of two variables
 $f : \R \times [0,T] \rightarrow \C$
and $1 \leq p, q < \infty$, let's
define the mixed norm
$$
\|f\|_{L^q_TL^p_x}
:=\Big( 
\int^T_0
\Big(
\int_{\R} |f(x,t)|^p \, dx
\Big)^{q/p} \, dt
\Big)^{1/q},
$$
with the usual modifications involving the essential supremum when  $p=\infty$
or $q=\infty$. It is convenient to introduce
\begin{equation}\label{triplenormdef}
\vertiii{f}
:=\|f\|_{L^{\infty}_T L^2_x}
+
\|f\|_{L^{4}_T L^{\infty}_x},    
\end{equation}
which is a norm recurrently used along this paper.

Next, we turn our attention to the initial value problem for the linear Schr\"odinger equation 
\begin{equation*}
\left\{
\begin{array}{ll}
i \partial_t u 
+ \partial_x^2 u 
=0, & x \in \mathbb{R}, \, t>0, \\
u(x,0)=u_{0}(x), & x \in \R,
\end{array}
\right.
\end{equation*}
whose solution can be written as
$u(x,t)=e^{it \partial_x^2}u_0(x)$, where $e^{it \partial_x^2}$ denotes the Fourier multiplier given by
\begin{equation*}
(e^{it \partial_x^2} u_0)^{\wedge}(\xi)
:= e^{-it\xi^2} \widehat{u_0}(\xi).
\end{equation*}
It is well-known (see e.g. \cite[Th. 4.2]{LP2015}) that  
\begin{equation}\label{property_414}
\Big(\int_{\R}\| e^{it \partial_x^2} f\|^q_{L^p_x} dt\Big)^{1/q}
\lesssim
\|f\|_{L^2}, 
\end{equation}
provided that
\begin{equation}\label{condition_414}
 2\leq p\leq \infty\quad\text{and}\quad 
 \frac{2}{q}=\frac{1}{2}-\frac{1}{p}. 
\end{equation}
Furthermore (\cite[Corollary 4.1]{LP2015}), for any $T>0$,
\begin{equation}\label{corollary_41}
\Big(
\int_0^T
\Big\|\int_0^t e^{i(t-t') \partial_x^2} f(\cdot,t')dt'
\Big\|^{q_1}_{L_x^{p_1}}dt
\Big)^{1/{q_1}}
\lesssim
\Big(
\int_0^T
\|f(\cdot,t')\|^{q'_0}_{L_x^{p'_0}}dt
\Big)^{1/{q'_0}},
\end{equation}
for pairs $(p_0,q_0)$, and $(p_1,q_1)$  satisfying the condition \eqref{condition_414} and with conjugated exponents $p_0'$, $q_0'$
such that
\begin{align*}
\frac{1}{p_0}+\frac{1}{p_0'}=\frac{1}{q_0}+\frac{1}{q_0'}=1.
\end{align*}

\quad \\
In \cite[Lemma 2]{NP2009}  it was also established that, for any
$r\in (0,1)$ and $t>0$,
\begin{equation}\label{semigroup_lemma}
\| |x|^r e^{it \partial_x^2} f \|_{L^2_x}
\lesssim
t^{r/2}
\|f \|_{L^2}
+
t^{r}
\|D_x^{r}f\|_{L^2}
+
\| |x|^r f\|_{L^2}.
\end{equation}

\quad
\section{System of nonlinear Schr\"odinger equations}
Consider the nonlinear Schr\"odinger (NLS) system of equations
\begin{equation}\label{trNSE1}
\left\{
\begin{array}{l}
i\partial_{t}\varphi+\partial_x^2\varphi
= -i\lambda\varphi^2\overline{\psi}, \\
i\partial_{t}\psi+\partial_x^2 \psi
=i\lambda\psi^2\overline{\varphi}, \\
\varphi(x,0)=\varphi_0(x), \\
\psi(x,0)=\psi_0(x),
\end{array}
\right.
\end{equation}
and also set the constraint 
\begin{equation}\label{constraint}
\psi=\partial_x\varphi
+
i\frac{\lambda}{2}|\varphi|^2\varphi.
\end{equation}

The following remark connects the constrained NLS system 
\eqref{trNSE1}-\eqref{constraint} with the DNLS equation \eqref{DNLS} 
(see for example \cite[pp. 147-148]{Ozawa1996}).

\begin{Rem}\label{Lem:phipsiu}
Let $\varphi$ and $\psi$ be solutions of \eqref{trNSE1}  with constraint \eqref{constraint} and
$$\varphi_0(x)
:=\exp\Big(
-i\lambda\int_{-\infty}^{x}|u_{0}(y)|^2dy\Big)u_{0}(x).
$$
Then, if we define
$$u(x,t)
:=\exp\Big(i\lambda\int_{-\infty}^{x}|\varphi(y,t)|^2dy\Big)\varphi(x,t),
$$
it solves the initial value problem \eqref{DNLS}. Conversely, if $u$ is a solution to \eqref{DNLS}, it solves the constrained NLS system \eqref{trNSE1}-\eqref{constraint}
\end{Rem}

\subsection{Unconstrained NLS system}  \quad \\
The next result is a modification of 
\cite[Proposition 2.2]{Ozawa1996}.

\begin{Prop}\label{Prop:system}
Let $\lambda \in \R$, $r \in (0,1]$ and  
$\varphi_0,\psi_0 \in H^2(\R)\cap L^2(|x|^{2r}dx)$. Then, there exist $T>0$ and unique solutions $\varphi$, $\psi$ of the NLS system \eqref{trNSE1}
 verifying that 
\begin{equation}\label{eq:persistenciofvarphipsi}
\varphi(\cdot,t), \psi(\cdot,t)
\in H^{2}(\R)\cap L^2(|x|^{2r}dx),
\quad t \in (0,T].    
\end{equation}
\end{Prop}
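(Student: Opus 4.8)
The plan is to prove local well-posedness for the unconstrained system by a fixed-point argument on the Duhamel formulation, then separately propagate the weighted $L^2(|x|^{2r}dx)$ regularity. Writing the system \eqref{trNSE1} in integral form,
\begin{equation*}
\varphi(t)
= e^{it\partial_x^2}\varphi_0
+ i\lambda\int_0^t e^{i(t-t')\partial_x^2}\big(\varphi^2\overline{\psi}\big)(t')\,dt',
\end{equation*}
and analogously for $\psi$, I would set up a contraction in a closed ball of the Banach space whose norm controls $\vertiii{\varphi}+\vertiii{\psi}$ together with the $H^2$-regularity of two derivatives (measured through $\vertiii{D_x^2\varphi}$ and the corresponding quantities for $\psi$). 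The cubic nonlinearities $\varphi^2\overline{\psi}$ and $\psi^2\overline{\varphi}$ are algebraically symmetric, so the estimates for the two equations are parallel. First I would establish the contraction: using the Strichartz inequalities \eqref{property_414}–\eqref{corollary_41} on the Duhamel term and the dual admissible pair, I would bound the mixed-norm of the nonlinearity by products of $\vertiii{\cdot}$-norms, relying on the algebra/product estimate \eqref{eq:Sobolevproduct} and the embedding \eqref{eq:Linfinityembeding} (valid since $s>1/2$, and here $s=2$) to handle the $H^2$-level derivatives landing on the cubic terms via the Leibniz rule. Choosing $T$ small makes the nonlinear map a self-map and a contraction on the ball, yielding existence and uniqueness of $\varphi,\psi\in C([0,T];H^2)$.

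Having secured the solution at the $H^2$ level, the second stage is to show $\varphi(\cdot,t),\psi(\cdot,t)\in L^2(|x|^{2r}dx)$ for $t\in(0,T]$. Here the key tool is the weighted semigroup bound \eqref{semigroup_lemma}: applying $|x|^r$ to the Duhamel formula and using \eqref{semigroup_lemma} on both the linear evolution $e^{it\partial_x^2}\varphi_0$ and, inside the time integral, on $e^{i(t-t')\partial_x^2}(\varphi^2\overline{\psi})(t')$, I would derive an integral inequality of the form
\begin{equation*}
\||x|^r\varphi(t)\|_{L^2}
\lesssim
t^{r/2}\|\varphi_0\|_{L^2}
+ t^r\|D_x^r\varphi_0\|_{L^2}
+ \||x|^r\varphi_0\|_{L^2}
+ \int_0^t\big(\cdots\big)\,dt',
\end{equation*}
where the integrand involves $\||x|^r(\varphi^2\overline{\psi})(t')\|_{L^2}$ plus lower-order $D_x^r$ and $L^2$ contributions of the nonlinearity. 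The $\|D_x^r\cdot\|_{L^2}$ and $\|\cdot\|_{L^2}$ pieces of the nonlinearity are already controlled since $r\le 1\le 2$ and the solution lives in $H^2$. For the genuinely weighted term I would factor $|x|^r(\varphi^2\overline{\psi}) = (|x|^r\varphi)\varphi\overline{\psi}$ and bound the non-weighted factors in $L^\infty$ via \eqref{eq:Linfinityembeding}, so the whole integrand is dominated by (a power of the $H^2$-norm) times $(\||x|^r\varphi\|_{L^2}+\||x|^r\psi\|_{L^2})$. Because $r\in(0,1]$ and $t>0$, the factors $t^{r/2}$ and $t^r$ are harmless, and a Gronwall argument then closes the weighted bound on $[0,T]$, giving \eqref{eq:persistenciofvarphipsi}.

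I expect the main obstacle to be the weighted estimate rather than the $H^2$ contraction, for two interlocking reasons. First, the inequality \eqref{semigroup_lemma} as stated holds for $r\in(0,1)$, so the endpoint $r=1$ must be treated separately — there one can instead use the exact identity $x\,e^{it\partial_x^2}f = e^{it\partial_x^2}(xf) + 2it\,e^{it\partial_x^2}\partial_x f$, which reduces the weight-$1$ propagation to $L^2$ bounds on $xf$ and $\partial_x f$, both available at the $H^2\cap L^2(|x|^{2}dx)$ level. Second, within the Duhamel integral the singular-in-time factors $(t-t')^{r/2}$ and $(t-t')^{r}$ must be integrated against the nonlinearity; since these are integrable near $t'=t$ for $r\in(0,1]$, the time integration is benign, but one must be careful to keep the $D_x^r$-of-nonlinearity term under control, which again uses the fractional Leibniz rule together with the $H^2$ bound. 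Once these technical points are handled, assembling the weighted and unweighted estimates yields the persistence \eqref{eq:persistenciofvarphipsi} on a common time interval.
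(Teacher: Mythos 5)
Your proposal is correct in substance but organizes the argument differently from the paper. The paper runs a \emph{single} Banach fixed-point argument on the product space $X^a_T\times X^a_T$, where the norm $\|\cdot\|_{X_T}=\|\cdot\|_{Y_T}+\||x|^{r}\cdot\|_{L^\infty_T L^2_x}$ already contains the weighted component; the weighted persistence then falls out of the contraction itself, with the Duhamel term handled exactly as you suggest (apply \eqref{semigroup_lemma} under the time integral and factor $|x|^{r}(\varphi^2\overline{\psi})=(|x|^{r}\varphi)\,\varphi\,\overline{\psi}$, estimating the unweighted factors via \eqref{eq:Sobolevproduct} and \eqref{eq:Linfinityembeding}). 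You instead close the contraction at the $H^2$ level only and propagate the weight a posteriori by a Gronwall inequality. Both routes are viable, but note what the paper's choice buys: since the iterates live in $X^a_T$, finiteness of $\||x|^{r}\varphi(t)\|_{L^2}$ is automatic, whereas your Gronwall step is applied to a quantity not yet known to be finite, which is circular as written. To make your second stage rigorous you need the standard truncation device (replace $|x|^{r}$ by $\min(|x|,N)^{r}$, check that the constants in the resulting analogue of \eqref{semigroup_lemma} and in the integral inequality are uniform in $N$, run Gronwall, and let $N\to\infty$), or else fold the weight into the fixed-point space as the paper does. Your remark about the endpoint $r=1$ is well taken: \eqref{semigroup_lemma} is quoted for $r\in(0,1)$, and the exact commutation relation between $x$ and $e^{it\partial_x^2}$ (your identity, up to a sign fixed by the Fourier convention) is what covers $r=1$; the paper does not comment on this point. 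One small inaccuracy: the factors $(t-t')^{r/2}$ and $(t-t')^{r}$ produced by \eqref{semigroup_lemma} inside the Duhamel integral are bounded on $[0,T]$, not singular, so there is no integrability issue to worry about there.
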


\begin{proof} 
We start considering the system of integral equations
\begin{equation}\label{eq:Syst_phi}
\varphi(x,t)
= e^{it \partial_x^2}\varphi_0(x)+i\lambda\int_{0}^te^{i(t-t') \partial_x^2}
(\varphi^2\overline{\psi})(x,t')\, dt',
\end{equation}
\begin{equation}\label{eq:Syst_psi}
\psi(x,t)
=e^{it \partial_x^2}\psi_0(x)+i\lambda\int_{0}^te^{i(t-t') \partial_x^2}
(-\psi^2\overline{\varphi})(x,t')\, dt',    
\end{equation}
which can be combined using vector notation as
\begin{equation}\label{eq:integraleq}
(\varphi,\psi)
=
\Phi(\varphi,\psi),    
\end{equation}
for
 \begin{equation}\label{eq:42}
\Phi(\varphi,\psi):= e^{it \partial_x^2}(\varphi_0,\psi_0)(x)+i\lambda\int_{0}^te^{i(t-t') \partial_x^2}(\varphi^2\overline{\psi},-\psi^2\overline{\varphi})(x,t')\, dt'.  \end{equation}
 
Notice that, by using Duhamel's principle, any solution of the differential equations \eqref{trNSE1} is also a solution of the integral equations \eqref{eq:integraleq}. The converse is not true in general. However, for initial data $\varphi_0,\psi_0 \in H^2(\R)$ one has that a solution of \eqref{eq:integraleq} also satisfies the NLS system \eqref{trNSE1} (proceed as in \cite[Corollary 5.5]{LP2015}).
Therefore, to prove this Proposition we are going to show the existence and uniqueness of solutions of \eqref{eq:integraleq} verifying the persistency property \eqref{eq:persistenciofvarphipsi}.\\

For some constants $a,T>0$, that will be conveniently fixed later, consider the  space
\begin{equation*}\label{eq:43}
X^a_T :=
\{
\varphi \text{ : }
\|\varphi\|_{X_T} \leq a
\},
\end{equation*}
endowed with the norm
\begin{equation*}
\| \varphi \|_{X_T}
:=
 \| \varphi \|_{Y_T}
+\| 
|x|^{r} \varphi\|_{L^\infty_T L^2_x},
\end{equation*}
where
\begin{equation*}
 \| \varphi \|_{Y_T}
:=
\vertiii{\varphi}
+
\vertiii{\partial_x \varphi}
+
\vertiii{\partial^2_x \varphi}
\end{equation*}
with $\vertiii{\cdot}$ introduced in \eqref{triplenormdef}.
Moreover, we define 
\begin{equation*}
\|(\varphi,\psi)\|_{X_T \times X_T}
:=
\|\varphi\|_{X_T}
+
\|\psi\|_{X_T},    
\end{equation*}
and say that $(\varphi,\psi) \in X^a_T\times X^a_T$ provided that 
\begin{equation*}
\|(\varphi,\psi)\|_{X_T \times X_T}
\leq 2a.
\end{equation*}

By the Banach fixed-point theorem, it is enough to establish that the mapping $\Phi$ given in \eqref{eq:42} is a contraction on the complete metric space $X^a_T \times X^a_T$.\\

\underline{\textit{Step 1: $\Phi$ is well defined}}.
Our first goal is to see that $\Phi : X^a_T \times X^a_T \longmapsto X^a_T \times X^a_T$, that is, for $\varphi, \psi \in X^a_T$ we need to show that
\begin{equation}\label{eq:goalPHI}
\|\Phi(\varphi, \psi)\|_{X_T \times X_T}
\leq 2a.
\end{equation}

\quad \\
Recalling well-known results for the unweighted case $r=0$ (see for example
\cite[Proposition 2.2]{Ozawa1996} and the references therein) one can deduce that
\begin{equation}\label{eq:goalPHI'}
\|\Phi(\varphi, \psi)
\|_{Y_{T} \times Y_{T}}
\lesssim 
\|\varphi_0\|_{H^2}
+
\|\psi_0\|_{H^2}
+
T^{\theta}
(
\| \varphi \|_{Y_T}^2 \,
\| \psi \|_{Y_T}
+
\| \varphi \|_{Y_T} \,
\| \psi \|_{Y_T}^2
)
,
\end{equation}
for certain $\theta>0$.
According to 
\eqref{eq:Syst_phi} and \eqref{eq:Syst_psi}, it remains to control the sum of the terms
\vspace{-\baselineskip}
\begin{multicols}{2}
\begin{align*}
& I_1 :=
\| 
|x|^{r}e^{it \partial_x^2}(\varphi_0)
\|_{L^\infty_T L^2_x}, \\
& I_2 :=
\| 
|x|^{r}e^{it \partial_x^2}(\psi_0)
\|_{L^\infty_T L^2_x}, 
\end{align*}
\begin{align*}
& I_3 :=
\Big\| 
|x|^{r}\int_{0}^t
e^{i(t-t') \partial_x^2}(\varphi^2\overline{\psi})(x,t')
\, dt'
\Big\|_{L^\infty_T L^2_x}, \\
& I_4 :=
\Big\| 
|x|^{r}\int_{0}^t
e^{i(t-t') \partial_x^2}(\psi^2\overline{\varphi})(x,t')
\, dt'
\Big\|_{L^\infty_T L^2_x} .
\end{align*}
\end{multicols}
By symmetry, it is enough to consider $I_1$ and $I_3$. 
For the analysis of $I_1$, we simply use
\eqref{semigroup_lemma}, and \eqref{eq:Sobolevinclusion} to get
\begin{align}\label{eq:I1}
I_1 
& \lesssim 
T^{r/2} \| \varphi_0 \|_{L^2}
+
T^{r} \| D^r_x \varphi_0 \|_{L^2}
+
\||x|^r  \varphi_0 \|_{L^2}
\nonumber \\
& \lesssim 
(T^{r/2}+T^{r})\|\varphi_0\|_{H^{1}}
+\||x|^{r}\varphi_0\|_{L^2}.
\end{align}

As for $I_{3}$, once again \eqref{semigroup_lemma} produces
\begin{align}\label{eq:I3}
I_{3}
& \leq
\int_{0}^T
\Big\| 
|x|^{r}
e^{i(t-t') \partial_x^2}(\varphi^2\overline{\psi})(x,t')
\Big\|_{L^\infty_T L^2_x}
\, dt' \nonumber \\
& \lesssim 
T^{1+r/2} \,
\|\varphi^2 \psi\|_{L^\infty_T L^2_x}
+
T^{1+r} \,
\|D_x^{r}(\varphi^2\overline{\psi})\|_{L^\infty_T L^2_x}+
T \, \| |x|^{r} \varphi^2 \psi \|_{L^\infty_T L^2_{x}} \nonumber \\
& \lesssim
(T^{1+r/2} + T^{1+r}) \,
\|\varphi^2 \overline{\psi}\|_{L^\infty_TH^1_{x}}
+
T \,
\|\varphi  \|_{L^\infty_T L_x^{\infty}} \,
\| \psi \|_{L^\infty_T L_x^{\infty}} \,
\||x|^{r}\varphi\|_{L^\infty_T  L_x^2}
\nonumber \\
& \lesssim
(T^{1+r/2} + T^{1+r}) \,
 \,
\|\varphi\|_{L^\infty_TH^1_{x}}^2
\|\psi\|_{L^\infty_TH^1_{x}} 
+
T \,
\|\varphi  \|_{L^\infty_T H_x^{1}} \,
\| \psi \|_{L^\infty_T H_x^{1}} \,
\||x|^{r}\varphi\|_{L^\infty_T  L_x^2} \nonumber \\
& \lesssim 
(T^{1+r/2} + T^{1+r} + T)
\|\varphi \|_{X_T}^2 \,
\| \psi \|_{X_T}
\end{align}
where we also used the Sobolev properties 
\eqref{eq:Sobolevproduct}
and
\eqref{eq:Linfinityembeding}.\\

Combining 
\eqref{eq:goalPHI'},
\eqref{eq:I1} and 
\eqref{eq:I3}
(and taking  into consideration the analogous estimates for $I_2$ and $I_4$) we deduce that,
for every $\varphi, \psi \in X^a_T$,
\begin{align*}
\|\Phi(\varphi, \psi)\|_{X_T \times X_T}
& \leq 
 C
\Big(
(1+T^{\theta})
(\|\varphi_0\|_{H^{2}}
+
\|\psi_0\|_{H^{2}})
+
\||x|^{r}\varphi_0\|_{L^2}
+
\||x|^{r}\psi_0\|_{L^2}
\Big) \\
& \quad + 
C \,
T^{\theta} \, a^3,
\end{align*}
for some $C, \theta>0$.\\

Finally, set
\begin{equation*}
a:= 
C
(
\|\varphi_0\|_{H^{2}}
+
\|\psi_0\|_{H^{2}}
+
\||x|^{r}\varphi_0\|_{L^2}
+
\||x|^{r}\psi_0\|_{L^2}
)
\end{equation*}
and chose $T$ small enough verifying
\begin{equation}\label{eq:conditionforT}
 C \, T^{\theta} \,
(\|\varphi_0\|_{H^{2}}
+
\|\psi_0\|_{H^{2}})
+
C \, T^{\theta} \,
 a^3
\leq a,
\end{equation}
to conclude \eqref{eq:goalPHI}.\\

\underline{\textit{Step 2: $\Phi$ is a contraction}}.
Now the task is to show the existence of a constant $0<K<1$
verifying that
\begin{equation*}
\|\Phi(\varphi, \psi)
- \Phi(\widetilde{\varphi}, \widetilde{\psi})\|_{X_T \times X_T}
\leq K \|(\varphi, \psi)
-
(\widetilde{\varphi}, \widetilde{\psi})\|_{X_T\times X_T},
\end{equation*}
for $\varphi, \widetilde{\varphi},
\psi, \widetilde{\psi} \in X^a_T$.

It is known that (see
\cite[Proposition 2.2]{Ozawa1996}),
for some $\theta >0$,
\begin{align}\label{eq:KY_t}
\|\Phi(\varphi, \psi)
- \Phi(\widetilde{\varphi}, \widetilde{\psi})\|_{Y_T \times Y_T}
& \lesssim 
T^\theta 
\big(
\|\varphi\|_{Y_T}
+\|\widetilde{\varphi}\|_{Y_T}
+\|\psi\|_{Y_T}
+\|\widetilde{\psi}\|_{Y_T}
\big)^2 \nonumber \\
& \quad \times
\|(\varphi, \psi)
-
(\widetilde{\varphi}, \widetilde{\psi})\|_{Y_T \times Y_T}.
\end{align}
Hence, we concentrate on the terms
\begin{align*}
J_1
&:=\Big\| |x|^{r}\int_{0}^te^{i(t-t') \partial_x^2}
(\varphi^2\overline{\psi}-\widetilde{\varphi}^2\overline{\widetilde{\psi}})(x,t')\, dt'\Big\|_{L^\infty_T L^2_x}, \\
J_2
&:=\Big\| |x|^{r}\int_{0}^te^{i(t-t') \partial_x^2}
(\psi^2\overline{\varphi}-\widetilde{\psi}^2\overline{\widetilde{\varphi}})(x,t')\, dt'\Big\|_{L^\infty_T L^2_x}.
\end{align*}
We only analyze $J_1$, $J_2$ can be treated analogously.\\

Next, observe that we can rewrite the difference of nonlinear factors as follows
\begin{equation}\label{eq:34}
\varphi^2
\overline{\psi}
-
\widetilde{\varphi}^2
\overline{\widetilde{\psi}}
=
\varphi^2
(\overline{\psi}
-
\overline{\widetilde{\psi}})
+
\overline{\widetilde{\psi}}
(\varphi+
\widetilde{\varphi})
(\varphi-
\widetilde{\varphi}).
\end{equation}

\quad \\
Then, proceeding as in 
\eqref{eq:I3}
and
\eqref{eq:KY_t}
above, we get
\begin{align}\label{eq:J1}
J_1
& \lesssim
T \,
\| |x|^{r} 
(\varphi^2\overline{\psi}-\widetilde{\varphi}^2\overline{\widetilde{\psi}}) \|_{L^\infty_T L^2_{x}}  +
(T^{1+r/2}+T^{1+r}) \,
\| \varphi^2\overline{\psi}-\widetilde{\varphi}^2\overline{\widetilde{\psi}} \|_{L^\infty_T H^2_x} \nonumber \\
& \leq
T \,
\Big(
\| |x|^{r} \varphi\|_{L^\infty_T L^2_{x}} \,
\| \varphi \|_{L^\infty_T H^2_{x}} \,
\| \psi-\widetilde{\psi} \|_{L^\infty_T H^2_{x}} \nonumber \\
& \qquad +
\| |x|^{r} \widetilde{\psi}\|_{L^\infty_T L^2_{x}} \,
\| \varphi \|_{L^\infty_T H^2_{x}} \,
\| \varphi-\widetilde{\varphi}\|_{L^\infty_T H^2_{x}} \nonumber \\
& \qquad +
\| |x|^{r} \widetilde{\psi}\|_{L^\infty_T L^2_{x}} \,
\| \widetilde{\varphi} \|_{L^\infty_T H^2_{x}} \,
\| \varphi-\widetilde{\varphi}\|_{L^\infty_T H^2_{x}}
\Big) \nonumber \\
& \quad +
(T^{1+r/2}+T^{1+r})
\Big(
\|\varphi\|_{L^\infty_T H^2_{x}}^2 \,
\| \psi-\widetilde{\psi} \|_{L^\infty_T H^2_{x}} 
+
\|\widetilde{\psi}\|_{L^\infty_T H^2_{x}} \,
\|\varphi\|_{L^\infty_T H^2_{x}} \,
\| \varphi-\widetilde{\varphi} \|_{L^\infty_T H^2_{x}} \nonumber \\
& \hspace{4cm} +
\|\widetilde{\psi}\|_{L^\infty_T H^2_{x}} \,
\|\widetilde{\varphi}\|_{L^\infty_T H^2_{x}} \,
\| \varphi-\widetilde{\varphi} \|_{L^\infty_T H^2_{x}} 
\Big).
\end{align}

\quad \\
Putting together \eqref{eq:KY_t}, 
\eqref{eq:J1} and the corresponding symmetric estimate for $J_2$ we get, for $\varphi, \widetilde{\varphi},
\psi, \widetilde{\psi} \in X^a_T$,
\begin{align*}
\|\Phi(\varphi, \psi)
- \Phi(\widetilde{\varphi}, \widetilde{\psi})\|_{X_T \times X_T}
& \leq 
C T^\theta a^2  
 \|(\varphi, \psi)
-
(\widetilde{\varphi}, \widetilde{\psi})\|_{X_T\times X_T},
\end{align*}
for certain $C,\theta>0$.
Therefore, it is possible to pick $T$ satisfying simultaneously \eqref{eq:conditionforT} and $C T^\theta a^2 <1$.
\end{proof}

\subsection{Constrained NLS system} \quad \\
The following lemma can be shown as in \cite[Proposition 2.3]{Ozawa1996} (see also \cite[Proposition 2.4]{HO1994}), since our extra hypothesis concerning the weighted Sobolev initial data do not play a crucial role in the proofs.

\begin{Lem}\label{Prop:u2}
Let 
$\varphi_0, \psi_0 \in \mathcal{S}(\R)$ related by the constraint
\begin{equation}\label{eq:constrain_psi0}
\psi_0
=\partial_x \varphi_0+i\frac{\lambda}{2}|\varphi_0|^2\varphi_0.    
\end{equation}
For $T>0$ sufficiently small and $\varphi$, $\psi$ the solutions
of the NLS system \eqref{trNSE1} provided by  Proposition \ref{Prop:system}, we have that 
\begin{equation}\label{eq:constrainforallt}
\psi(\cdot,t)
=\partial_x\varphi (\cdot,t)
+i\frac{\lambda}{2}|\varphi|^2\varphi(\cdot,t), \quad t \in (0,T].
\end{equation}
\end{Lem}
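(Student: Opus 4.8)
The plan is to propagate the constraint by controlling the \emph{defect}
\[
w := \psi - \partial_x\varphi - i\frac{\lambda}{2}|\varphi|^2\varphi ,
\]
and showing that it vanishes identically on $[0,T]$. By the hypothesis \eqref{eq:constrain_psi0} we have $w(\cdot,0)=0$, so it suffices to derive a closed evolution equation for $w$ in which the zero initial datum forces $w\equiv 0$. Since $w\equiv 0$ is an assertion in the unweighted space $L^2_x$, the weighted part of the norm in Proposition \ref{Prop:system} plays no role here; I only use that the solutions $\varphi,\psi$ furnished by that proposition are regular. Indeed, because $\varphi_0,\psi_0\in\mathcal{S}(\R)$, the same contraction argument propagates arbitrary Sobolev regularity (possibly after shrinking $T$, as permitted by the statement), so $\varphi,\psi$ are smooth and decaying and all the pointwise computations and integrations by parts below are legitimate.

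First I would apply the operator $i\partial_t+\partial_x^2$ to each of the three summands of $w$, using the system \eqref{trNSE1} to eliminate time derivatives. Differentiating the first equation of \eqref{trNSE1} in $x$ handles $\partial_x\varphi$, while for $|\varphi|^2\varphi=\varphi^2\overline{\varphi}$ one substitutes $i\partial_t\varphi=-\partial_x^2\varphi-i\lambda\varphi^2\overline{\psi}$ together with its conjugate and expands by the Leibniz rule; several second-order terms already cancel at this stage. Collecting everything and writing $\psi=\partial_x\varphi+i\frac{\lambda}{2}|\varphi|^2\varphi+w$, the crucial point is that \emph{every term not containing a factor $w$, $\overline{w}$ or $\partial_x\overline{w}$ cancels}. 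This cancellation is the heart of the matter: it is the infinitesimal compatibility between the constraint \eqref{constraint} and the flow \eqref{trNSE1}, and it hinges on the precise coefficients (the $\mp i\lambda$ in the two equations and the $i\lambda/2$ in the constraint). What survives is a linear-plus-quadratic equation of the schematic form
\[
i\partial_t w+\partial_x^2 w
= i\lambda\varphi^2\,\partial_x\overline{w}
+ b_1\, w + b_2\,\overline{w} + i\lambda\,\overline{\varphi}\, w^2 ,
\]
where $b_1,b_2$ are explicit polynomials in $\varphi,\partial_x\varphi,\overline{\varphi},\overline{\partial_x\varphi}$, hence smooth and bounded on $\R\times[0,T]$.

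With this equation in hand I would close the argument by a standard $L^2$ energy estimate. Computing $\frac{d}{dt}\|w\|_{L^2_x}^2=2\,\mathrm{Re}\int_\R \overline{w}\,\partial_t w\,dx$, the top-order contribution $\mathrm{Re}\int \overline{w}\,i\partial_x^2 w\,dx$ vanishes after integration by parts, as usual. The only delicate term is the first-order one: using $\overline{w}\,\partial_x\overline{w}=\tfrac12\partial_x(\overline{w}^{\,2})$ and integrating by parts transfers the derivative onto the bounded coefficient $\varphi^2$, so this contribution is $\lesssim\|w\|_{L^2_x}^2$. The zeroth-order terms are trivially $\lesssim\|w\|_{L^2_x}^2$, and the quadratic term is bounded by $\|\overline{\varphi}\|_{L^\infty_x}\|w\|_{L^\infty_x}\|w\|_{L^2_x}^2$, with $\|w\|_{L^\infty_x}\lesssim\|w\|_{H^1_x}$ uniformly bounded on $[0,T]$. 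Altogether $\frac{d}{dt}\|w\|_{L^2_x}^2\lesssim\|w\|_{L^2_x}^2$, so Gronwall's inequality together with $\|w(\cdot,0)\|_{L^2_x}=0$ forces $w(\cdot,t)\equiv 0$ for $t\in[0,T]$, which is exactly \eqref{eq:constrainforallt}.

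The main obstacle is the bookkeeping behind the cancellation of the $w$-free source: it is purely algebraic but requires tracking a number of cubic and quintic monomials in $\varphi$ and its derivatives, and it is precisely where the structure of the constrained system is used. Once that cancellation is checked, the remaining energy estimate is routine and the weighted norm never enters, consistent with the remark that the weighted hypotheses are inessential for this step.
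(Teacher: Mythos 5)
Your argument is correct and is essentially the same one the paper relies on: the paper does not prove this lemma itself but defers to \cite[Proposition 2.3]{Ozawa1996} and \cite[Proposition 2.4]{HO1994}, which propagate the constraint by exactly this defect--energy--Gronwall scheme, and I verified that with the coefficients $\mp i\lambda$ in \eqref{trNSE1} and $i\lambda/2$ in \eqref{constraint} the $w$-free source terms do cancel and the surviving equation has the form you state, with the first-order term $i\lambda\varphi^2\partial_x\overline{w}$ handled by your integration by parts. The only point to make explicit is the justification of the $L^2$ pairing with $\partial_x^2 w$ when $\varphi(\cdot,t)$ is a priori only in $H^2$ (so that $w(\cdot,t)\in H^1$), which your appeal to persistence of higher regularity for Schwartz data, after possibly shrinking $T$, adequately covers.
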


\begin{Prop}\label{Prop:u}
Let $r \in (0,1]$ and $\rho>0$.
Assume also that
$\varphi_0, \psi_0 \in \mathcal{S}(\R)$ 
verify
\begin{equation*}
\psi_0
=\partial_x \varphi_0+i\frac{\lambda}{2}|\varphi_0|^2\varphi_0    
\end{equation*}
and
\begin{equation*}
\|\varphi_0\|_{H^1}
+\|\psi_0\|_{H^1}
+\||x|^{r}\varphi_0\|_{L^2}
\leq \rho.
\end{equation*}
Then, there exists $T>0$ such that the corresponding solutions $\varphi,\psi$ 
of the NLS system \eqref{trNSE1} (given by  Proposition \ref{Prop:system}) satisfy
\begin{equation}\label{eq:(52)}
\vertiii{\varphi}
+
\vertiii{\partial_x\varphi}
+
\vertiii{\partial_x^2 \varphi}
+
\vertiii{\psi}
+
\vertiii{\partial_x\psi}
+
\||x|^{r}\varphi\|_{L^\infty_T L^2_x}
\leq C(\rho, T),
\end{equation}
where $C(\rho, T)$ denotes a positive constant depending only on $\rho$ and $T$.
Furthermore, for any other pair of initial data
$\widetilde{\varphi}_0, \widetilde{\psi}_0 \in \mathcal{S}(\R)$ 
verifying  also
$$\widetilde{\psi}_0
=\partial_x\widetilde{\varphi}_0+i\frac{\lambda}{2}|\widetilde{\varphi}_0|^2\widetilde{\varphi}_0
$$
and
\begin{equation*}
\|\widetilde{\varphi}_0\|_{H^1}
+\|\widetilde{\psi}_0\|_{H^1}
+\||x|^{r}\widetilde{\varphi}_0\|_{L^2}
\leq \rho
\end{equation*}
with associated solutions $\widetilde{\varphi}$ and $\widetilde{\psi}$, we have that 
\begin{align}\label{eq:37}
& \vertiii{\varphi-\widetilde{\varphi}}
+\vertiii{\partial_x(\varphi-\widetilde{\varphi})}
+\vertiii{\partial_x^2(\varphi-\widetilde{\varphi})} 
\nonumber \\ 
& \qquad 
+\vertiii{\psi-\widetilde{\psi}}
+\vertiii{\partial_x (\psi- \widetilde{\psi})}
+ \||x|^{r}(\varphi-\widetilde{\varphi})\|_{L^\infty_T L^2_x}
\nonumber\\
&\quad  \lesssim
\|\varphi_0-\widetilde{\varphi}_0\|_{H^1}
+\|\psi_0-
\widetilde{\psi}_0\|_{H^1}
+\||x|^{r}
(\varphi_0-
\widetilde{\varphi}_0)\|_{L^2},
\end{align}
where the hidden constant in \eqref{eq:37} might depend on $\rho$ and $T$.
\end{Prop}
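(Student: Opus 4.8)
The plan is to start from the Schwartz solutions $\varphi,\psi$ furnished by Proposition \ref{Prop:system}, whose constraint \eqref{eq:constrainforallt} is preserved by Lemma \ref{Prop:u2}, and to derive \emph{a priori} bounds for the quantities in \eqref{eq:(52)} with constants depending on $\rho$ and $T$ only. The conceptual point is that, although $\psi_0$ is assumed to lie only in $H^1$ (so that Proposition \ref{Prop:system} by itself guarantees an existence time governed by the uncontrolled norm $\|\psi_0\|_{H^2}$), the constraint lets me work in the smaller space $\varphi\in H^2$, $\psi\in H^1$. Indeed $\partial_x\varphi=\psi-i\frac\lambda2|\varphi|^2\varphi$ and $\partial_x^2\varphi=\partial_x\psi-i\frac\lambda2\partial_x(|\varphi|^2\varphi)$, so that $\vertiii{\partial_x\varphi}$ and $\vertiii{\partial_x^2\varphi}$ in \eqref{eq:(52)} are controlled \emph{algebraically} by $\vertiii\varphi$, $\vertiii\psi$, $\vertiii{\partial_x\psi}$ via \eqref{eq:Sobolevproduct} and \eqref{eq:Linfinityembeding}. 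Hence I never need to estimate $\partial_x^2\psi$, and the genuinely dynamical unknowns reduce to the core quantity $M(T):=\vertiii\varphi+\vertiii\psi+\vertiii{\partial_x\psi}+\||x|^r\varphi\|_{L^\infty_TL^2_x}$.

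First I would bound $M(T)$ from the Duhamel formulas \eqref{eq:Syst_phi}--\eqref{eq:Syst_psi} and their once-differentiated versions. For the $\vertiii{\cdot}$ pieces I use the homogeneous and inhomogeneous Strichartz estimates \eqref{property_414}, \eqref{corollary_41} together with \eqref{eq:Sobolevproduct} and \eqref{eq:Linfinityembeding}; wherever a second derivative of $\varphi$ occurs inside a nonlinearity I first rewrite it through the constraint as above. For the weighted piece I apply \eqref{semigroup_lemma} exactly as in \eqref{eq:I3}, placing the weight $|x|^r$ on a factor of $\varphi$ in each cubic term (every nonlinearity $\varphi^2\overline\psi$, $\psi^2\overline\varphi$ contains at least one $\varphi$), so that $\||x|^r(\cdots)\|_{L^2}\lesssim\|\varphi\|_{L^\infty}\|\psi\|_{L^\infty}\||x|^r\varphi\|_{L^2}$ and the estimate closes on $\||x|^r\varphi\|$ alone, the accompanying $D_x^r$-term being absorbed by the $H^1$-level quantities since $r\le1$. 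The outcome should be an inequality $M(T)\le C\rho+CT^\theta\,P(M(T))$ for a polynomial $P$ and some $\theta>0$; a standard continuity/bootstrap argument then yields $T=T(\rho)>0$ with $M(T)\le C(\rho,T)$, and recovering $\vertiii{\partial_x\varphi}$, $\vertiii{\partial_x^2\varphi}$ from the constraint gives \eqref{eq:(52)}.

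To make this rigorous I must also check that the smooth solution survives up to $T(\rho)$. This follows from persistence of regularity: once $M(T)$ is under control, the top-order equation for $\partial_x^2\psi$ (equivalently $\partial_x^3\varphi$) is linear in its highest derivative with coefficients bounded by $M(T)\le C(\rho)$, so Gronwall propagates all higher Sobolev norms of the Schwartz data without blow-up on $[0,T(\rho)]$, meeting the continuation criterion behind Proposition \ref{Prop:system}. The Lipschitz estimate \eqref{eq:37} is then obtained by the same machinery on the difference system: I factor the cubic differences as in \eqref{eq:34}, bound each coefficient by the $C(\rho,T)$ a priori bound just established, and run the Strichartz, energy and weighted estimates plus Gronwall on the differences, handling $\||x|^r(\varphi-\widetilde\varphi)\|$ once more through \eqref{semigroup_lemma} with the weight on a $\varphi$-type factor.

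The main obstacle I expect is the first step: closing the a priori estimate in the constrained space $\varphi\in H^2$, $\psi\in H^1$ with a time depending on $\rho$ only. The crux is the systematic use of the constraint to eliminate every occurrence of $\partial_x^2\psi$ and to keep the weighted bound self-contained in $\||x|^r\varphi\|$; estimating $\psi$ in $H^2$ directly would make the time depend on $\|\psi_0\|_{H^2}$, which is not controlled by $\rho$. A secondary technical point is the endpoint $r=1$, where \eqref{semigroup_lemma} (valid for $r\in(0,1)$) must be replaced by the commutator identity $x\,e^{it\partial_x^2}=e^{it\partial_x^2}(x+2it\partial_x)$, giving $\|x\,e^{it\partial_x^2}f\|_{L^2}\le\|xf\|_{L^2}+2t\|\partial_x f\|_{L^2}$, which feeds into the same scheme.
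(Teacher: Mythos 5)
Your proposal follows essentially the same route as the paper: both use the constraint \eqref{eq:constrainforallt} to recover $\vertiii{\partial_x^2\varphi}$ algebraically from $\vertiii{\partial_x\psi}$ and lower-order terms, bound $\vertiii{\partial_x\psi}$ and $\||x|^{r}\varphi\|_{L^\infty_T L^2_x}$ via Duhamel, Strichartz \eqref{property_414}--\eqref{corollary_41} and the weighted semigroup estimate \eqref{semigroup_lemma} with the weight placed on a $\varphi$-factor, close by small-$T$ absorption, and treat \eqref{eq:37} through the factorization \eqref{eq:34}; the only presentational difference is that the paper imports the unweighted $H^1$-level bounds \eqref{eq:41} and \eqref{ozawaspart} from Ozawa rather than re-deriving them in a single a priori quantity $M(T)$. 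Your remark on the endpoint $r=1$, where \eqref{semigroup_lemma} is stated only for $r\in(0,1)$ and the commutator identity $x\,e^{it\partial_x^2}=e^{it\partial_x^2}(x+2it\partial_x)$ is needed, addresses a point the paper passes over silently.
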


We present the proofs of 
\eqref{eq:(52)}
and
\eqref{eq:37}
separately below. 

\begin{proof}[Proof of \eqref{eq:(52)}]

From \cite[Proposition 2.4]{Ozawa1996}), we can find $T_1>0$ satisfying that 
\begin{equation}\label{eq:41}
\vertiii{\varphi} 
+
\vertiii{\partial_x\varphi}
+
\vertiii{\psi} 
\leq C(\rho,T_1).
\end{equation}
Therefore, it remains to guarantee that 
\begin{equation}\label{eq:ABC}
\mathcal{A} 
+ \mathcal{B} 
+ \mathcal{C} 
:=\vertiii{ \partial_x \psi} +\vertiii{\partial_x^2\varphi}+ \||x|^{r}\varphi\|_{L^\infty_T L^2_x}
<C(\rho,T),
\end{equation}
for some $T \in (0,T_1]$ sufficiently small.\\

Since, $\psi$ solves the integral equation \eqref{eq:Syst_psi} (recall the remark at the begining of the proof of Proposition \ref{Prop:system}), we have that
\begin{align}\label{eq:Adecomp}
\mathcal{A}
&\lesssim
\| e^{it \partial_x^2}(\partial_x\psi_0)\|_{L_T^{\infty}L^2_x}
+
\| e^{it \partial_x^2}
(\partial_x \psi_0)\|_{L_T^{4}L^\infty_x} \nonumber \\
& \qquad +
\Big\|\int_{0}^te^{i(t-t') \partial_x^2}
\partial_x (\psi^2\overline{\varphi})(x,t')\, dt' \Big\|_{L_T^{\infty}L^2_x} \nonumber \\
& \qquad +
\Big\|\int_{0}^te^{i(t-t') \partial_x^2}
\partial_x(\psi^2\overline{\varphi})(x,t')\, dt' \Big\|_{L_T^{4}L^\infty_x} \nonumber \\
& =: 
\mathcal{A}_1
+ \mathcal{A}_2
+ \mathcal{A}_3
+ \mathcal{A}_4.
\end{align}
From, \eqref{property_414} we immediately obtain  
\begin{equation}\label{eq:A1A2}
\mathcal{A}_1 + \mathcal{A}_2
\lesssim 
\|\partial_x\psi_0\|_{L^2}
\lesssim 
\|\psi_0\|_{H^1} < \rho.
\end{equation}
Similarly, using
\eqref{corollary_41}
and
\eqref{eq:Sobolevproduct} we get
\begin{align}\label{eq:A3A4}
\mathcal{A}_3 + \mathcal{A}_4
&\lesssim 
\|\partial_x(\psi^2\overline{\varphi})\|_{L^1_T L^2_x}
\lesssim
T
\|\psi^2\overline{\varphi}\|_{L^\infty_T H^1_x}
\lesssim
T
\|\varphi\|_{L^\infty_T H^1_x}
\|\psi \|_{L^\infty_T H^1_x}^2 \nonumber \\
& \sim T
\Big(
\|\varphi\|_{L^\infty_T L^2_x} 
+ \|\partial_x \varphi\|_{L^\infty_T L^2_x}
\Big)
\Big( 
\|\psi\|_{L^\infty_T L^2_x}^2
+ \|\partial_x \psi\|_{L^\infty_T L^2_x}^2
\Big) \nonumber \\
& \lesssim T 
\Big(
\vertiii{\varphi}
+ 
\vertiii{\partial_x \varphi}
\Big)
\Big( 
\vertiii{\psi}^2
+ \vertiii{\partial_x \psi}^2
\Big).
\end{align}
Combining 
\eqref{eq:41}, \eqref{eq:A1A2} and \eqref{eq:A3A4} 
we deduce, for any $T<T_1$,
\begin{equation*}
\mathcal{A}
\lesssim 
\rho
+
T \,
C(\rho,T_1)
\Big( 
C(\rho,T_1)^2
+ \mathcal{A}^2
\Big).
\end{equation*}
Hence, it is possible to choose $T_2<T_1$ sufficiently small verifying that
\begin{equation}\label{eq:A}
\mathcal{A}
\lesssim C(\rho,T_2).  
\end{equation}

\quad \\
For the analysis of the factor $\mathcal{B}$, we  apply \eqref{eq:constrainforallt} to write
\begin{equation}\label{eq:B12}
\mathcal{B}
\lesssim 
\vertiii{\partial_x\psi}
+
\vertiii{\partial_x(|\varphi|^2 \varphi)}
=: \mathcal{B}_1 + \mathcal{B}_2.
\end{equation}
Notice that $\mathcal{B}_1=\mathcal{A}$, treated above. On the other hand, \eqref{eq:Linfinityembeding} yields to
\begin{align}\label{eq:B2}
\mathcal{B}_2
& \lesssim
\|
\varphi^2
\partial_x \overline{\varphi}
\|_{L^{\infty}_T L^2_x}
+
\|
\varphi^2
\partial_x \overline{\varphi}
\|_{L^{4}_TL^{\infty}_x}
+
\||\varphi|^2
\partial_x \varphi\|_{L^{\infty}_T L^2_x}
+
\| |\varphi|^2 
\partial_x \varphi\|_{L^{4}_TL^{\infty}_x} \nonumber \\
& \lesssim 
\|\partial_x \varphi\|_{L^{\infty}_T L^2_x}
\|\varphi\|^2_{L^{\infty}_T H^1_x}
+\|\partial_x \varphi\|_{L^{4}_T L^\infty_x}
\|\varphi\|^2_{L^{\infty}_T H^1_x} \nonumber \\
& \lesssim 
\vertiii{\partial_x \varphi}
\Big(
\vertiii{\varphi}^2 
+
\vertiii{\partial_x\varphi}^2
\Big).
\end{align}
Thus, \eqref{eq:41} and \eqref{eq:A} imply
\begin{equation}\label{eq:B}
\mathcal{B}
\lesssim C(\rho,T_2).  
\end{equation}

\quad \\
We finally consider the term 
$\mathcal{C}$. Using this time the integral equation \eqref{eq:Syst_phi}, one realizes that
\begin{align}\label{eq:CC}
\mathcal{C}
& \lesssim 
\||x|^{r} e^{it \partial_x^2}\varphi_0 \|_{L^\infty_T L^2_x}
+
\Big\||x|^{r} \int_{0}^te^{i(t-t') \partial_x^2}
(\varphi^2 \overline{\psi})
(x,t')\, dt' \Big\|_{L^\infty_T L^2_x}
=: \mathcal{C}_1+ \mathcal{C}_2.
\end{align}
Next, similarly to \eqref{eq:I1} we deduce, for any $T<T_2$,
\begin{align}\label{eq:C1}
\mathcal{C}_1 
&\lesssim
(T^{r/2}+T^m)
\|\varphi_0\|_{H^1}
+\||x|^{r}\varphi_0\|_{L^2}
 \lesssim 
C(\rho, T_2).
\end{align}
As for $\mathcal{C}_2$ we proceed as in \eqref{eq:I3} above to get
\begin{align}\label{eq:C2}
\mathcal{C}_2
&\lesssim 
T \,
\|\varphi  \|_{L^\infty_T H_x^{1}} \,
\| \psi \|_{L^\infty_T H_x^{1}} \,
\||x|^{r}\varphi\|_{L^\infty_T  L_x^2}
+
(T^{1+r/2} + T^{1+r}) \,
 \,
\|\varphi\|_{L^\infty_TH^1_{x}}^2
\|\psi\|_{L^\infty_TH^1_{x}}
\nonumber \\
& \lesssim
T \,
\Big( 
\vertiii{\varphi}
+ 
\vertiii{\partial_x \varphi}
\Big)
\Big( 
\vertiii{\psi}
+ 
\vertiii{\partial_x \psi}
\Big)
\||x|^{r}\varphi\|_{L^\infty_T  L_x^2} \nonumber \\
& \qquad +
(T^{1+r/2} + T^{1+r}) \,
\Big( 
\vertiii{\varphi}^2
+ 
\vertiii{\partial_x \varphi}^2
\Big)
\Big( 
\vertiii{\psi}
+ 
\vertiii{\partial_x \psi}
\Big).
\end{align}
Therefore, putting together \eqref{eq:41}, \eqref{eq:A},
\eqref{eq:C1} and \eqref{eq:C2}, one sees that for $T<T_2$
\begin{align*}
\mathcal{C}
\lesssim 
C(\rho,T_2)
+ T C(\rho,T_2) \mathcal{C}.
\end{align*}
Thus, taking $T_3<T_2$ small enough, we conclude
\begin{equation}\label{eq:C}
\mathcal{C}
\lesssim C(\rho,T_3).
\end{equation}

\quad\\
In summary, our goal \eqref{eq:ABC} is a consequence of \eqref{eq:A}, \eqref{eq:B} and \eqref{eq:C}.
\end{proof}

\begin{proof}[Proof of \eqref{eq:37}]
In most inequalities below the hidden constants might depend on $\rho$ and $T$ as in \eqref{eq:(52)}, however for simplicity we are not going to always specify them explicitly. 
Recall from \cite[Proposition 2.4]{Ozawa1996} that
\begin{align}\label{ozawaspart}
\vertiii{\varphi-\widetilde{\varphi}}
+
\vertiii{\partial_x\varphi-\partial_x\widetilde{\varphi}}
+\vertiii{\psi-\widetilde{\psi}}
& \lesssim
\|\varphi_0-\widetilde{\varphi}_0\|_{L^2}
+\|\psi_0-\widetilde{\psi}_0\|_{L^2}.
\end{align}
Then, it is sufficient to establish
\begin{align}\label{eq:goal'}
\mathscr{A} 
+ \mathscr{B} 
+ \mathscr{C} 
&:=
\vertiii{\partial_x\psi-\partial_x\widetilde{\psi}}
+
\vertiii{\partial^2_x\varphi-\partial^2_x\widetilde{\varphi}}
+ 
\||x|^{r}(\varphi-\widetilde{\varphi})\|_{L^\infty_T L^2_x} \nonumber \\
&\lesssim
\|\varphi_0-\widetilde{\varphi}_0\|_{H^1}
+\|\psi_0-\widetilde{\psi}_0\|_{H^1}
+\||x|^{r}(\varphi_0-\widetilde{\varphi}_0)\|_{L^2}.    
\end{align}

\quad \\
We start controlling $\mathscr{A}$. 
Similarly to steps
\eqref{eq:Adecomp},
\eqref{eq:A1A2}
and
\eqref{eq:A3A4}
above and using the key observation 
\begin{equation*}
\psi^2\overline{\varphi}
-
(\widetilde{\psi})^2
\overline{\widetilde{\varphi}}
=
\psi^2
\overline{
(\varphi-\widetilde{\varphi})}
+
\overline{\widetilde{\varphi}}
(\psi+\widetilde{\psi})
(\psi-\widetilde{\psi}),
\end{equation*}
it is not difficult to deduce
\begin{align*}
\mathscr{A}
&\lesssim
\|\psi_0 - \widetilde{\psi}_0\|_{H^1}
+ T
\|
\psi^2\overline{\varphi}
-
(\widetilde{\psi})^2
\overline{\widetilde{\varphi}}
\|_{L^\infty_T H^1_x} \nonumber \\  
& \lesssim
\|\psi_0 - \widetilde{\psi}_0\|_{H^1}
+ T
\|
\psi^2
\overline{
(\varphi-\widetilde{\varphi})}
\|_{L^\infty_T H^1_x}
+ T
\|
\overline{\widetilde{\varphi}}
(\psi+\widetilde{\psi})
(\psi-\widetilde{\psi})
\|_{L^\infty_T H^1_x} \nonumber \\
& \lesssim
\|\psi_0 - \widetilde{\psi}_0\|_{H^1}
+ T
\Big(
\vertiii{\varphi-\widetilde{\varphi}}
+ 
\vertiii{\partial_x \varphi
- \partial_x \widetilde{\varphi}}
\Big)
\Big( 
\vertiii{\psi}^2
+ \vertiii{\partial_x \psi}^2
\Big) \nonumber \\
& \qquad + T
\Big( 
\vertiii{\widetilde{\varphi}}
+ \vertiii{\partial_x \widetilde{\varphi}}
\Big)
\Big( 
\vertiii{\psi}
+ \vertiii{\partial_x \psi}
+ \vertiii{\widetilde{\psi}}
+ \vertiii{\partial_x \widetilde{\psi}}
\Big) \nonumber \\
& \qquad \qquad \times
\Big(
\vertiii{\psi-\widetilde{\psi}}
+ 
\vertiii{\partial_x \psi
- \partial_x \widetilde{\psi}}
\Big).
\end{align*}
Furthermore, 
\eqref{eq:(52)}
and
\eqref{ozawaspart} yield to
\begin{equation*}
\mathscr{A}
\lesssim    
\|\varphi_0-\widetilde{\varphi}_0\|_{L^2}
+\|\psi_0-\widetilde{\psi}_0\|_{H^1}
+ T \mathscr{A},
\end{equation*}
which allows us to take $T>0$ sufficiently small to conclude
\begin{equation}\label{eq:A''}
\mathscr{A}
\lesssim    
\|\varphi_0-\widetilde{\varphi}_0\|_{L^2}
+\|\psi_0-\widetilde{\psi}_0\|_{H^1}.
\end{equation}

\quad \\
We decompose $\mathscr{B}$ as in 
\eqref{eq:B12},
\begin{align*}
\mathscr{B}
& \lesssim 
\vertiii{
\partial_x\psi
-
\partial_x\widetilde{\psi}}
+
\vertiii{\partial_x(
|\varphi|^2 \varphi
-
|\widetilde{\varphi}|^2 \widetilde{\varphi})} 
=: \mathscr{B}_1 + \mathscr{B}_2
\end{align*}
and observe that $\mathscr{B}_1=\mathscr{A}$.
On the other hand, applying this time the relations
\begin{align*}
\varphi^2
\partial_x \overline{\varphi}
-
\widetilde{\varphi}^2
\partial_x \overline{\widetilde{\varphi}}
& = 
\varphi^2 
\overline{(\partial_x \varphi
-
\partial_x \widetilde{\varphi})}
+
\partial_x \overline{\widetilde{\varphi}}
(\varphi + \widetilde{\varphi})
(\varphi-\widetilde{\varphi})
\\
|\varphi|^2 \partial_x \varphi
-
|\widetilde{\varphi}|^2
\partial_x \widetilde{\varphi}
& =
|\varphi|^2 
(\partial_x \varphi - \partial_x \widetilde{\varphi})
+
\partial_x \widetilde{\varphi}
\, \varphi \, 
\overline{(\varphi-\widetilde{\varphi})}
+
\partial_x \widetilde{\varphi}
\, \overline{\widetilde{\varphi}} \, 
(\varphi-\widetilde{\varphi})
\end{align*}
and reasoning as in \eqref{eq:B2}, give
\begin{align*}
\mathscr{B}_2
& \lesssim
\vertiii{
\varphi^2\partial_x \overline{\varphi}
-
\widetilde{\varphi}^2
\partial_x \overline{\widetilde{\varphi}} 
}
+
\vertiii{
|\varphi|^2 \partial_x \varphi
-
|\widetilde{\varphi}|^2
\partial_x \widetilde{\varphi}
} \\
& \lesssim 
\Big( 
\vertiii{\varphi}
+
\vertiii{\widetilde{\varphi}}
+
\vertiii{\partial_x \varphi}
+
\vertiii{\partial_x \widetilde{\varphi}}
+
\vertiii{\partial_x^2 \widetilde{\varphi}}
\Big)^2 \\
& \qquad \times \Big( 
\vertiii{\varphi - \widetilde{\varphi}}
+
\vertiii{\partial_x \varphi - \partial_x \widetilde{\varphi}}
\Big).
\end{align*}
Thus, taking into account
\eqref{eq:(52)} and
\eqref{ozawaspart}
\begin{equation}\label{eq:B'}
\mathscr{B}
\lesssim
\|\varphi_0-\widetilde{\varphi}_0\|_{L^2}
+\|\psi_0-\widetilde{\psi}_0\|_{H^1}.
\end{equation}

\quad \\
Finally, by repeating the arguments from \eqref{eq:CC}, we have
\begin{align*}
\mathscr{C}
&\lesssim 
\||x|^{r} 
e^{it \partial_x^2}
(\varphi_0-\widetilde{\varphi}_0) \|_{L^\infty_T L^2_x}
+
\Big\||x|^{r} \int_{0}^te^{i(t-t') \partial_x^2}(\varphi^2\overline{\psi}-\widetilde{\varphi}^2
\overline{\widetilde{\psi}})(x,t')\, dt' \Big\|_{L^\infty_T L^2_x}\\
&:=
\mathscr{C}_1+\mathscr{C}_2,
\end{align*}
and again, as in \eqref{eq:C1}, it follows 
\begin{equation*}
\mathscr{C}_1
\lesssim
\|\varphi_0-\widetilde{\varphi}_0\|_{H^1}
+\||x|^{r}(\varphi_0-\widetilde{\varphi}_0)\|_{L^2}.
\end{equation*}
The treatment of $\mathscr{C}_2$ is parallel to \eqref{eq:C2}, after rewriting the difference of nonlinear terms via
\eqref{eq:34}, namely
\begin{align*}
\mathscr{C}_2    
& \lesssim
\Big(
\|\varphi  \|_{L^\infty_T H_x^{1}} \,
\||x|^{r}\varphi\|_{L^\infty_T  L_x^2}  \,
+
\|\varphi\|_{L^\infty_TH^1_{x}}^2
\Big) 
\| \psi - \widetilde{\psi} \|_{L^\infty_T H_x^{1}}\\
& \quad +
\Big(
\|\varphi\|_{L^\infty_TH^1_{x}}  
+
\|\widetilde{\varphi}\|_{L^\infty_TH^1_{x}}
+
\||x|^{r}\varphi\|_{L^\infty_T  L_x^2}  
+
\||x|^{r}\widetilde{\varphi}\|_{L^\infty_T  L_x^2}  
\Big) 
\|\widetilde{\psi}  \|_{L^\infty_T H_x^{1}} \,
\|\varphi - \widetilde{\varphi}\|_{L^\infty_TH^1_{x}}.
\end{align*}
As usual, from here 
\eqref{eq:(52)}, \eqref{ozawaspart}
and \eqref{eq:A''}
lead to
\begin{equation*}
\mathscr{C}_2
\lesssim
\|\varphi_0-\widetilde{\varphi}_0\|_{L^2}
+\|\psi_0-\widetilde{\psi}_0\|_{H^1},
\end{equation*}
so
\begin{equation}\label{eq:C'}
\mathscr{C}
\lesssim    
\|\varphi_0-\widetilde{\varphi}_0\|_{H^1}
+\|\psi_0-\widetilde{\psi}_0\|_{H^1}
+\||x|^{r}(\varphi_0-\widetilde{\varphi}_0)\|_{L^2}.
\end{equation}

\quad \\
In brief, \eqref{eq:A''}, \eqref{eq:B'} and \eqref{eq:C'} show \eqref{eq:goal'}.
\end{proof}

\quad
\section{Proof of Theorem \ref{Th:main}} 

We start defining the initial data
$$\varphi_0(x)
:=\exp\Big(
-i\lambda\int_{-\infty}^{x}|u_{0}(y)|^2dy\Big)u_{0}(x).
$$
Observe that $\varphi_0 \in  H^{2}(\R)$. However, 
if we would simply take
$$\psi_0
:=\partial_x \varphi_0+i\frac{\lambda}{2}|\varphi_0|^2\varphi_0,
$$
we could not guarantee that $\psi_0 \in H^{2}(\R)$ and hence Proposition \ref{Prop:system} does not apply for $\varphi_0, \psi_0$. 
Instead, we consider a sequence $\{\varphi_0^{(n)}\}_{n \in \mathbb{N}} \subset \mathcal{S}(\R)$ converging to $\varphi_0$ in $H^{2}(\R) \cap L^2(|x|^{2r} dx)$, that is,
\begin{equation}\label{limitdifference_of_varphi}
\lim_{n \to \infty} 
\Big(
\|\varphi^{(n)}_{0} - \varphi_{0}\|_{H^{2}(\R)}
+
\||x|^{r}(\varphi^{(n)}_{0} - \varphi_{0})\|_{L^2(\R)} \Big)
=0.
\end{equation}
Moreover, let
\begin{equation}\label{eq:psi0(n)}
\psi_{0}^{(n)}(x)
:=
\partial_x\varphi_0^{(n)}(x)+i\frac{\lambda}{2}|\varphi_0^{(n)}(x)|^2\varphi^{(n)}_0(x), 
\qquad n \in \mathbb{N},    
\end{equation}
which are again regular functions in $\mathcal{S}(\R) \subset H^{2}(\R) \cap L^2(|x|^{2r} dx)$.

By \eqref{limitdifference_of_varphi} we can choose $\rho>0$ such that 
\begin{equation}\label{eq:rho(n)}
\|\varphi_0^{(n)}\|_{H^1}
+
\|\psi_0^{(n)}\|_{H^1}
+
\||x|^{r}\varphi_0^{(n)}\|_{L^2}
\leq \rho, \quad n \in \N.
\end{equation}

Next, for each $n \in \N$, denote by $(\varphi^{(n)}, \psi^{(n)})$ the solution of the NLS system \eqref{trNSE1} provided by Proposition \ref{Prop:system} corresponding to the initial data $(\varphi^{(n)}_0, \psi^{(n)}_0)$. Additionally introduce
\begin{equation*}
u^{(n)}(x,t)
:= 
\exp\Big(
i\lambda\int_{-\infty}^{x}|\varphi^{(n)}(y,t)|^2dy\Big)
\varphi^{(n)}(x,t)
\end{equation*}
and
\begin{equation*}
u_0^{(n)}(x)
:= 
\exp\Big(
i\lambda\int_{-\infty}^{x}|\varphi^{(n)}_0(y)|^2dy\Big)
\varphi^{(n)}_0(x).
\end{equation*}

Our aim is to see that the sequence $\{u^{(n)}\}_{n \in \N}$ converges to certain function $u$ which satisfies \eqref{DNLS} and \eqref{eq:persistenciofu}. The following technical estimate will be the key.

\begin{Lem}
Fix $r\in(0,1]$ and  let 
$\{\varphi_0^{(n)}\}_{n \in \mathbb{N}}$,
$\{\psi_0^{(n)}\}_{n \in \mathbb{N}}$
and
$\{u^{(n)}\}_{n \in \mathbb{N}}$ be the sequences defined above. 
Then, there exist $\rho, T>0$ verifying that,
for any $m,n \in \N$,
\begin{equation}\label{eq:upartial12u}
\vertiii{u^{(n)}}
+\vertiii{\partial_x u^{(n)}}
+
\vertiii{\partial^2_x u^{(n)}}
+ 
\||x|^{r}u^{(n)}\|_{L^\infty_TL^2_x}
\leq
C(\rho,T)
\end{equation}
and
\begin{align}\label{um-un}
&
\vertiii{u^{(m)}-u^{(n)}}
+\vertiii{\partial_x (u^{(m)}-u^{(n)})}
+
\vertiii{\partial^2_x (u^{(m)}- u^{(n)})}
+ 
\||x|^{r}(u^{(m)}-u^{(n)})\|_{L^\infty_TL^2_x}\nonumber\\
& \quad \lesssim
\|\varphi_0^{(m)}-\varphi_0^{(n)}\|_{H^2}
+\||x|^{r}
(\varphi_0^{(m)}-\varphi_0^{(n)})\|_{L^2},
\end{align}
where the hidden constant in \eqref{um-un} might depend on $\rho$ and $T$.
\end{Lem}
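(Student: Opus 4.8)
The plan is to derive both estimates from Proposition \ref{Prop:u} through the gauge transformation relating $u^{(n)}$ to $\varphi^{(n)}$. Setting $\theta^{(n)}(x,t) := \lambda \int_{-\infty}^x |\varphi^{(n)}(y,t)|^2 \, dy$, so that $u^{(n)} = e^{i\theta^{(n)}}\varphi^{(n)}$ and $\partial_x\theta^{(n)} = \lambda|\varphi^{(n)}|^2$, the bound \eqref{eq:rho(n)} guarantees that the data $(\varphi_0^{(n)},\psi_0^{(n)})$ satisfy the hypotheses of Proposition \ref{Prop:u} with a single $\rho$. Thus on a common interval $(0,T]$ we may use \eqref{eq:(52)} (uniformly in $n$) and, applied to any pair of indices, the difference estimate \eqref{eq:37}.

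First I would establish \eqref{eq:upartial12u}. Since $|e^{i\theta^{(n)}}|=1$ pointwise we have $|u^{(n)}|=|\varphi^{(n)}|$, whence $\vertiii{u^{(n)}} = \vertiii{\varphi^{(n)}}$ and $\||x|^r u^{(n)}\|_{L^\infty_T L^2_x} = \||x|^r\varphi^{(n)}\|_{L^\infty_T L^2_x}$, both controlled by \eqref{eq:(52)}. Differentiating the gauge relation and invoking the constraint \eqref{eq:constrainforallt} gives
\[
\partial_x u^{(n)} = e^{i\theta^{(n)}}\Big(\psi^{(n)} + i\tfrac{\lambda}{2}|\varphi^{(n)}|^2\varphi^{(n)}\Big),
\]
and a second differentiation expresses $\partial_x^2 u^{(n)}$ as $e^{i\theta^{(n)}}$ times a polynomial in $\varphi^{(n)}, \psi^{(n)}, \partial_x\varphi^{(n)}, \partial_x\psi^{(n)}$. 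Estimating each product with \eqref{eq:Sobolevproduct}--\eqref{eq:Linfinityembeding} and the bounds \eqref{eq:(52)} yields \eqref{eq:upartial12u}; note that only first derivatives of $\psi^{(n)}$ occur, which is precisely what \eqref{eq:(52)} provides.

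Next I would prove \eqref{um-un}. Decomposing
\[
u^{(m)}-u^{(n)} = e^{i\theta^{(m)}}(\varphi^{(m)}-\varphi^{(n)}) + (e^{i\theta^{(m)}}-e^{i\theta^{(n)}})\varphi^{(n)},
\]
the first term has modulus $|\varphi^{(m)}-\varphi^{(n)}|$ and is absorbed by \eqref{eq:37}. For the phase I would use $|e^{ia}-e^{ib}|\le|a-b|$ and
\[
\theta^{(m)}-\theta^{(n)} = \lambda\int_{-\infty}^x\big(|\varphi^{(m)}|^2 - |\varphi^{(n)}|^2\big)(y,t)\,dy,
\]
so that $\|\theta^{(m)}-\theta^{(n)}\|_{L^\infty_x}\lesssim \|\varphi^{(m)}-\varphi^{(n)}\|_{L^2}$ by Cauchy--Schwarz and the uniform $L^2$ bounds. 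Differentiating the decomposition and repeatedly applying \eqref{eq:37}, \eqref{eq:(52)} and the uniform bounds from \eqref{eq:upartial12u} controls the three derivative differences and the weighted term. The right-hand side of \eqref{um-un} then emerges after observing, via \eqref{eq:psi0(n)} and the product estimate, that $\|\psi_0^{(m)}-\psi_0^{(n)}\|_{H^1}\lesssim\|\varphi_0^{(m)}-\varphi_0^{(n)}\|_{H^2}$, so the $H^1$ data difference in \eqref{eq:37} is dominated by the $H^2$ data difference.

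I expect the main obstacle to be the bookkeeping in the estimate for $\partial_x^2(u^{(m)}-u^{(n)})$: every derivative falling on a gauge factor produces $\partial_x\theta = \lambda|\varphi|^2$, and telescoping the difference of two such nonlinear phases generates numerous cross terms pairing the difference quantities $\varphi^{(m)}-\varphi^{(n)}$, $\partial_x(\varphi^{(m)}-\varphi^{(n)})$, $\psi^{(m)}-\psi^{(n)}$, $\partial_x(\psi^{(m)}-\psi^{(n)})$ against uniformly bounded factors. Arranging these so that each difference factor is one of the terms bounded in \eqref{eq:37}, while no second derivative of $\psi$ ever survives, is the delicate point; it is nonetheless purely algebraic once the gauge identities and \eqref{eq:constrainforallt} are in hand.
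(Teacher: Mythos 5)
Your proposal is correct and follows essentially the same route as the paper: apply Proposition \ref{Prop:u} uniformly in $n$ via \eqref{eq:rho(n)}, exploit $|e^{i\theta^{(n)}}|=1$ for the zeroth-order and weighted terms, differentiate the gauge relation for the derivative terms, and handle the differences with the decomposition $u^{(m)}-u^{(n)}=e^{i\theta^{(m)}}(\varphi^{(m)}-\varphi^{(n)})+(e^{i\theta^{(m)}}-e^{i\theta^{(n)}})\varphi^{(n)}$ together with the Lipschitz bound on the phase and the observation $\|\psi_0^{(m)}-\psi_0^{(n)}\|_{H^1}\lesssim\|\varphi_0^{(m)}-\varphi_0^{(n)}\|_{H^2}$. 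The only (immaterial) deviation is your use of the constraint \eqref{eq:constrainforallt} to rewrite $\partial_x u^{(n)}$ in terms of $\psi^{(n)}$, whereas the paper keeps everything in $\varphi^{(n)}$ and its derivatives, which \eqref{eq:(52)} already controls up to second order.
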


\begin{proof}[Proof of \eqref{eq:upartial12u}]
Observe that \eqref{eq:psi0(n)} and \eqref{eq:rho(n)} allow us to apply Proposition \ref{Prop:u}
to deduce
\begin{equation}\label{3norm:1}
\vertiii{\varphi^{(n)}}
+
\vertiii{\partial_x\varphi^{(n)}}
+
\vertiii{\partial_x^2 \varphi^{(n)}}
+
\vertiii{\psi^{(n)}}
+
\vertiii{\partial_x\psi^{(n)}}
+
\||x|^{r}\varphi^{(n)}\|_{L^\infty_T L^2_x}
\leq C(\rho, T)
\end{equation}
for certain $T>0$. Next we analyze each of the four terms appearing on the left hand side of \eqref{eq:upartial12u}.\\

To simplify the notation, let's call
\begin{equation}\label{eq:En}
\mathcal{E}^{(n)}(x,t) 
:=\exp\Big(
i\lambda\int_{-\infty}^{x}|\varphi^{(n)}(y,t)|^2dy\Big).
\end{equation}
Since 
$|\mathcal{E}^{(n)}(x,t)|
=1$, 
it is straightforward to see that
\begin{equation}\label{unorm}
\vertiii{u^{(n)}}
+
\||x|^r u^{(n)}\|_{L^{\infty}_T L^2_x}
=
\vertiii{\varphi^{(n)}}
+
\||x|^r\varphi^{(n)}\|_{L^{\infty}_T L^2_x}.
\end{equation}

\quad \\
On the other hand, observe that
\begin{align}\label{derivativeu}
\partial_x u^{(n)}
&=
\mathcal{E}^{(n)}
\Big[i\lambda
|\varphi^{(n)}|^2
\varphi^{(n)} 
+\partial_x\varphi^{(n)}\Big],
\end{align}
which implies, by using
\eqref{eq:Sobolevinclusion},
\eqref{eq:Sobolevproduct}
and 
\eqref{eq:Linfinityembeding},
\begin{align}\label{first_derivative_u}
\vertiii{\partial_x u^{(n)}}
&\lesssim
\|\varphi^{(n)}\|^3_{L^{\infty}_T H^1_x}
+
\|\partial_x\varphi^{(n)}\|_{L^{\infty}_T L^2_x}
+
\|\varphi^{(n)}\|^2_{L^{\infty}_TH^{1}_x}
\|\varphi^{(n)}\|_{L^{4}_TL^{\infty}_x}
+
\|\partial_x\varphi^{(n)}\|_{L^{4}_TL^{\infty}_x} \nonumber \\
&\lesssim
\Big(
\vertiii{\varphi^{(n)}}
+
\vertiii{\partial_x \varphi^{(n)}}
\Big)^3
+
\vertiii{\partial_x\varphi^{(n)}}.
\end{align}

\quad \\
Finally, differentiating once more in \eqref{derivativeu} produces
\begin{align*}
\partial^2_x u^{(n)}
&=
\mathcal{E}^{(n)}
\Big[
-\lambda^2 
|\varphi^{(n)}|^4
\varphi^{(n)}
+
3i\lambda
|\varphi^{(n)}|^2
\partial_x\varphi^{(n)} 
+
i\lambda
(\varphi^{(n)})^2
\partial_x\overline{\varphi^{(n)}}
+
\partial^2_x\varphi^{(n)}\Big].
\end{align*}
Thus,
\begin{align}\label{second_derivative_u}
\vertiii{\partial^2_x u^{(n)}}
&\lesssim 
\|\varphi^{(n)}\|^5_{L^{\infty}_T H^1_x}
+
\|\varphi^{(n)}\|^3_{L^{\infty}_T H^1_x}
+
\|\partial_x^2\varphi^{(n)}\|_{L^{\infty}_T L^2_x} \nonumber \\
& \quad +
\|\varphi^{(n)}\|^4_{L^{\infty}_T H^1_x}
\|\varphi^{(n)}\|_{L^{4}_T L^\infty_x}
+
\|\varphi^{(n)}\|^2_{L^{\infty}_T H^1_x}
\|\partial_x \varphi^{(n)}\|_{L^{4}_T L^\infty_x}
+
\|\partial_x^2\varphi^{(n)}\|_{L^{4}_T L^\infty_x}
\nonumber \\
&\lesssim 
\Big(
\vertiii{\varphi^{(n)}}
+
\vertiii{\partial_x \varphi^{(n)}}
\Big)^5
+
\Big(
\vertiii{\varphi^{(n)}}
+
\vertiii{\partial_x \varphi^{(n)}}
\Big)^3
+
\vertiii{\partial_x^2\varphi^{(n)}}.
\end{align}

\quad \\
Combining  \eqref{unorm},\eqref{first_derivative_u} and \eqref{second_derivative_u} 
with 
\eqref{3norm:1} we deduce 
\eqref{eq:upartial12u}.
\end{proof}

\begin{proof}[Proof of \eqref{um-un}]
Once again, \eqref{eq:psi0(n)} and \eqref{eq:rho(n)} together with Proposition \ref{Prop:u}
yields to
\begin{align}\label{3norm:2}
& \vertiii{\varphi^{(m)}-\varphi^{(n)}}
+\vertiii{\partial_x(\varphi^{(m)}-\varphi^{(n)})}
+\vertiii{\partial_x^2(\varphi^{(m)}-\varphi^{(n)})} \nonumber \\
& \qquad 
+\vertiii{\psi^{(m)}-\psi^{(n)}}
+\vertiii{\partial_x (\psi^{(m)}-\psi^{(n)})}
+ \||x|^{r}(\varphi^{(m)}-\varphi^{(n)})\|_{L^\infty_T L^2_x}
\nonumber\\
&\quad  \lesssim
\|\varphi^{(m)}_0-\varphi^{(n)}_0\|_{H^1}
+\|\psi^{(m)}_0-\psi^{(n)}_0\|_{H^1}
+\||x|^{r}(\varphi^{(m)}_0-\varphi^{(n)}_0)\|_{L^2},
\end{align}
for certain $T>0$. Furthermore, \eqref{eq:34},
\eqref{eq:constrain_psi0}, and \eqref{eq:rho(n)}
give us
\begin{equation}\label{eq:diffpsi}
\|\psi^{(m)}_0-\psi^{(n)}_0\|_{H^1}
\lesssim
\|\varphi^{(m)}_0-\varphi^{(n)}_0\|_{H^2}
+
\||\varphi_0^{(m)}|^2\varphi^{(m)}_0
-|\varphi_0^{(n)}|^2\varphi^{(n)}_0\|_{H^1} 
\lesssim
\|\varphi^{(m)}_0-\varphi^{(n)}_0\|_{H^2}.
\end{equation}

Next we denote each of the four terms on the left hand side of \eqref{um-un} 
by $\textbf{A}$, $\textbf{B}$, $\textbf{C}$, and $\textbf{D}$ respectively, and study them separately below. \\

Recalling the definition of $\mathcal{E}^{(n)}$ introduced in \eqref{eq:En}, we can write
\begin{equation}\label{eq:differenceu's}
u^{(m)}-u^{(n)}
= 
\mathcal{E}^{(m)}
\Big[\varphi^{(m)}-\varphi^{(n)}\Big]
+
\Big[\mathcal{E}^{(m)}-\mathcal{E}^{(n)}\Big]
\varphi^{(n)}.
\end{equation}
Moreover, the mean value theorem gives
\begin{align}\label{Ft_difference_meanvalue}
&|\mathcal{E}^{(m)}(x,t)-\mathcal{E}^{(n)}(x,t)|
\leq
\Big| \int_{-\infty}^{x}
 \Big(|\varphi^{(m)}(y,t)|^2
-
|\varphi^{(n)}(y,t)|^2 \Big) \, dy \Big| \nonumber\\
& \qquad \leq
 \int_{\R}
\Big| 
|\varphi^{(m)}(y,t)|
-
|\varphi^{(n)}(y,t)|\Big|
\Big(|\varphi^{(m)}(y,t)|
+
|\varphi^{(n)}(y,t)|\Big)
 dy \nonumber\\
&\qquad \leq
\|\varphi^{(m)}
-
\varphi^{(n)}\|_{L^\infty_T L^2_x}
\Big(\|\varphi^{(m)}\|_{L^\infty_T L^2_x}
+
\|\varphi^{(n)}\|_{L^\infty_T L^2_x}\Big),
\quad x \in \R, \quad t \in (0,T].
\end{align}

Putting together 
\eqref{3norm:1},
\eqref{eq:differenceu's}, and
\eqref{Ft_difference_meanvalue}
we arrive to
\begin{align}\label{eq:AD}
\textbf{A} + \textbf{D}
& \lesssim 
\vertiii{\varphi^{(m)}-\varphi^{(n)}}
+
\||x|^{r}(\varphi^{(m)}-\varphi^{(n)})\|_{L^\infty_T L^2_x}.
\end{align}

\quad\\
For the treatment of \textbf{B} we notice that,
\begin{align*}
\partial_x u^{(m)}
-
\partial_x u^{(n)} 
& =   
i \lambda \mathcal{E}^{(m)}
\Big[|\varphi^{(m)}|^2 \varphi^{(m)}
-
|\varphi^{(n)}|^2 \varphi^{(n)}\Big]
+
\mathcal{E}^{(m)}
\Big[\partial_x \varphi^{(m)}
-
\partial_x \varphi^{(n)}\Big]
\nonumber \\
& \quad +
\Big[\mathcal{E}^{(m)}
-
\mathcal{E}^{(n)}\Big]
\Big( 
i \lambda 
|\varphi^{(n)}|^2 \varphi^{(n)}
+
\partial_x \varphi^{(n)}\Big).
\end{align*}
Hence, proceeding similarly to \eqref{first_derivative_u}, properties \eqref{eq:34} and \eqref{Ft_difference_meanvalue} in combination with \eqref{3norm:1} produce
\begin{align}\label{mathbfB}
\textbf{B}
&\lesssim
\vertiii{\varphi^{(m)}-\varphi^{(n)}}+\vertiii{\partial_x(\varphi^{(m)}-\varphi^{(n)})}.
\end{align}

\quad\\
On the other hand, the difference of second derivatives can be expressed by means of
\begin{align*}
& \partial_x^2 u^{(m)}
 -
\partial_x^2 u^{(n)} 
 =   
- \lambda^2 
\mathcal{E}^{(m)}
\Big[ 
|\varphi^{(m)}|^4\varphi^{(m)}
-
|\varphi^{(n)}|^4\varphi^{(n)}
\Big] \\
& \quad + 3i \lambda
\mathcal{E}^{(m)}
\Big[ 
|\varphi^{(m)}|^2 \partial_x\varphi^{(m)}
-
|\varphi^{(n)}|^2 \partial_x\varphi^{(n)}
\Big] \\
& \quad +
i \lambda
\mathcal{E}^{(m)}
\Big[ 
(\varphi^{(m)})^2 \partial_x\overline{\varphi^{(m)}}
-
(\varphi^{(n)})^2 \partial_x\overline{\varphi^{(n)}}
\Big] 
+
\mathcal{E}^{(m)}
\Big[ 
\partial_x^2 \varphi^{(m)}
-
\partial_x^2 \varphi^{(n)}
\Big] \\
& \quad +
\Big[ 
\mathcal{E}^{(m)}
-
\mathcal{E}^{(n)}
\Big]
\Big( 
-\lambda^2 
|\varphi^{(n)}|^4
\varphi^{(n)}
+
3i\lambda
|\varphi^{(n)}|^2
\partial_x\varphi^{(n)} 
+
i\lambda
(\varphi^{(n)})^2
\partial_x\overline{\varphi^{(n)}}
+
\partial^2_x\varphi^{(n)}
\Big),
\end{align*}
which as above leads to
\begin{align}\label{mathbfC}
\textbf{C}
&\lesssim
\vertiii{\varphi^{(m)}
-
\varphi^{(n)}}
+\vertiii{\partial_x
(\varphi^{(m)}-\varphi^{(n)})
}
+
\vertiii{\partial^2_x(\varphi^{(m)}- \varphi^{(n)})}.
\end{align}

\quad \\
In summary, the estimates 
\eqref{eq:AD}, \eqref{mathbfB}
and
\eqref{mathbfC} jointly with  
\eqref{3norm:2} and \eqref{eq:diffpsi}
conclude the proof of \eqref{um-un}.
\end{proof}

We are now in position to finish the proof of Theorem \ref{Th:main}. Properties \eqref{um-un} and
\eqref{limitdifference_of_varphi} guarantee that, for some $T>0$ sufficiently small, 
the family $\{u^{(n)}\}_{n \in \N}$ is indeed a Cauchy sequence. If we denote by $u$ its limit, in particular it satisfies
\begin{equation*}
u(\cdot,t)\in H^{2}(\R)\cap L^2(|x|^{2r}dx),
\quad t \in (0,T].    
\end{equation*}
Moreover, recalling that the initial data $\varphi^{(n)}_0$ and $\psi^{(n)}_0$ were taken verifying the constraint \eqref{eq:psi0(n)},
then by Lemma \ref{Prop:u2} such constraint also holds for the 
the solutions
of the NLS system \eqref{trNSE1}, that is,
\begin{equation*}
\psi^{(n)}(\cdot,t)
=\partial_x\varphi^{(n)} (\cdot,t)
+i\frac{\lambda}{2}|\varphi^{(n)}(\cdot,t)|^2\varphi^{(n)}(\cdot,t), \quad t \in (0,T].
\end{equation*}
Therefore, Remark \ref{Lem:phipsiu} gives, for any $n \in \N$,
\begin{equation*}
\left\{
\begin{array}{ll}
i \partial_t u^{(n)}
+ \partial_x^2 u^{(n)}
=i\lambda \partial_x\big(|u^{(n)}|^2u^{(n)}\big), &  x\in \mathbb{R}, \quad 
t \in (0,T], \\
u^{(n)}(x,0)=u^{(n)}_{0}(x), & x \in \R.
\end{array}
\right.
\end{equation*}
Taking the limit as $n \to \infty$ we conclude 
\begin{equation*}
\left\{
\begin{array}{ll}
i \partial_t u
+ \partial_x^2 u
=i\lambda \partial_x\big(|u|^2u\big), &  x\in \mathbb{R}, \quad t \in (0,T], \\
u(x,0)=u_{0}(x), & x \in \R. 
\end{array}
\right. 
\end{equation*}

Finally, observe that the uniqueness of solutions of the DNLS equation \eqref{DNLS} follows from that of the NLS system \eqref{trNSE1}.
\qed


\quad \\

\end{document}